\newtheorem*{corollary*}{Corollary}
\newtheorem{theorem}{Theorem}[section]
\newtheorem{question}[theorem]{Question}
\newtheorem*{claim*}{Claim}
\newtheorem{conjecture}[theorem]{Conjecture}
\theoremstyle{definition}
\newtheorem*{theorem }{Theorem}
\newtheorem{problem}[theorem]{Problem}
\theoremstyle{remark}
\numberwithin{equation}{theorem}
\renewcommand*\env@matrix[1][\
arraystretch]{%
  \edef\arraystretch{#1}%
  \hskip -\arraycolsep
  \let\@ifnextchar\new@ifnextchar
  \array{*\c@MaxMatrixCols c}}
\renewcommand{\mod}{\operatorname{mod}}
\newcommand{\Ext}{\operatorname{Ext}}
\newcommand{\End}{\operatorname{End}}
\newcommand{\add}{\operatorname{\mathrm{add}}}
\renewcommand{\mod}{\operatorname{mod}}
\newcommand{\domdim}{\operatorname{domdim}}
\newcommand{\idim}{\operatorname{idim}}
\newcommand{\gldim}{\operatorname{gldim}}
\begin{document}

\title{Cluster tilting modules for local algebras}
\date{\today}

\subjclass[2010]{Primary 16G10, 16E10}

\keywords{higher Auslander algebra, cluster tilting module, local algebras}

\author[R.~Marczinzik]{Ren\'e Marczinzik}%
\address[R.~Marczinzik]{Mathematical Institute of the University of Bonn, Endenicher Allee 60, 53115 Bonn, Germany}
\email{marczire@math.uni-bonn.de}
\author[D. Owens]{Daniel Owens}%
\address[D. Owens]{Mathematical Institute of the University of Bonn, Endenicher Allee 60, 53115 Bonn, Germany}
\email{owensd2@tcd.ie}

\begin{abstract}
We give the first example of a non-trivial cluster tilting module in a local finite dimensional algebra.
To do this, we give an explicit calculation of the corresponding higher Auslander algebra by quiver and relations using the GAP-package QPA. 
We discuss related problems and conjectures for local finite-dimensional algebras.
\end{abstract}

\maketitle
\section*{Introduction}

We always assume that algebras $A$ are finite dimensional over a field $K$ and modules are finitely generated right modules unless otherwise stated.
Recall that an $A$-module $M$ is called \emph{$n$-cluster tilting} for $n \geq 1$ if 
$$\add M= \{ X \in \mod A \mid \Ext_A^i(X,M)=0 \ \forall \ 0 <i<n \}= \{ X \in \mod A \mid \Ext_A^i(M,X)=0 \ \forall \ 0 <i<n \}.$$
The \emph{dominant dimension} $\domdim A$ of an algebra $A$ with minimal injective coresolution 
$$0 \rightarrow A \rightarrow I^0 \rightarrow I^1 \rightarrow \cdots$$ is defined as the minimal $n$ such that $I^n$ is not projective, or as infinite if no such $n$ exists.
$A$ is called a \emph{higher Auslander algebra} if $\gldim A \leq n \leq \domdim A$. 
The $n=1$ case in the definition of cluster tilting modules means that there exists an $A$-module such that every indecomposable $A$-module is a direct summand of $M$, that is precisely when $A$ is representation-finite.
The classical \emph{Auslander correspondence} \cite{Aus} states that there is a bijective correspondence up to Morita equivalence between representation-finite algebras and Auslander algebras, that is algebras $B$ with $\gldim B \leq 2 \leq \domdim B.$
Iyama generalised this correspondence to the so called \emph{higher Auslander correspondence} \cite[Theorem 2.6]{Iya3}:

\begin{theorem} 
(Iyama) \label{higherAuscorr}
Let $n \geq 1$. There is a bijective correspondence between algebras $A$ with an $n$-cluster tilting module $M$
and higher Auslander algebras $B$ with $\gldim B \leq n+1 \leq \domdim B$ up to Morita equivalence.
In particular, a generator-cogenerator $M$ over a non-semisimple ring-indecomposable algebra $A$ is an $n$-cluster tilting module if and only if $B:=\End_A(M)$ is a higher Auslander algebra with $\gldim B= \domdim B=n+1$.
\end{theorem}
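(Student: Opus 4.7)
The plan is to build inverse constructions at the Morita-equivalence level. In the forward direction, starting from $(A,M)$ with $M$ an $n$-cluster tilting $A$-module, set $B := \End_A(M)$. In the backward direction, given a higher Auslander algebra $B$ with $\gldim B \leq n+1 \leq \domdim B$, take $M_B$ to be a basic additive generator of the projective-injective $B$-modules, set $A := \End_B(M_B)^{op}$, and regard $M_B$ as an $A$-module through the endomorphism action. The backbone of the whole argument is the Yoneda-type functor $F := \Hom_A(M,-)\colon \mod A \to \mod B$, which restricts to an equivalence $\add M \to \proj B$ and, when $M$ is a generator-cogenerator, identifies the injective $A$-modules with the projective-injective $B$-modules.

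For the forward direction, both homological bounds on $B$ come from $\add M$-(co)resolutions built out of the two equalities in the definition of an $n$-cluster tilting module. From $M$ being a cogenerator together with right Ext-vanishing, I would construct an exact sequence $0 \to A \to M^0 \to \cdots \to M^n$ with each $M^i \in \add M$ by iterated right $\add M$-approximations starting from the embedding of $A$ into its injective envelope (which lies in $\add M$). Applying $F$ produces the first $n+1$ terms of the minimal injective coresolution of $B$ inside $\add F(M) \subseteq \proj B \cap \inj B$, giving $\domdim B \geq n+1$. Dually, the generator property and left Ext-vanishing of $M$ furnish an $\add M$-resolution $0 \to M_n \to \cdots \to M_0 \to X \to 0$ of any $X \in \mod A$; applying $F$ yields a projective $B$-resolution of $FX$ of length at most $n$, and one extra projective step bounds the projective dimension of every simple $B$-module by $n+1$.

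For the backward direction, the hypothesis $\domdim B \geq n+1$ forces the first $n+1$ terms of the minimal injective coresolution of $B$ to be projective-injective, hence to lie in $\add M_B$. A Morita-Tachikawa-type double-centralizer argument then identifies $A := \End_B(M_B)^{op}$ as the correct finite-dimensional algebra over which $M_B$ is a generator-cogenerator. The natural comparison $\Ext_A^i(X,Y) \cong \Ext_B^i(FX, FY)$ on the relevant subcategory (for $X,Y \in \add M_B$ and $0 < i < n+1$) transports the global-dimension bound on $B$ into Ext-vanishing of $M_B$ against itself over $A$, and comparing $\mod A$ with $\mod B$ via $F$ upgrades this to the two Ext-perpendicularity equalities characterising $n$-cluster tilting.

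The main obstacle, as I see it, is organising the bookkeeping so that the two Ext-perpendicularity equalities in the definition of cluster tilting translate \emph{simultaneously} and with matching bounds into the two separate homological conditions on $B$: one half is controlled by the cogenerator/Morita-Tachikawa mechanism yielding dominant dimension, the other by resolution length yielding global dimension, and these must be reconciled to produce a genuine bijection rather than just a well-defined map in each direction. The refined statement for ring-indecomposable non-semisimple $A$ then follows because the inequality $\domdim B \leq \gldim B$ (valid whenever $\gldim B < \infty$) combined with $\gldim B \leq n+1 \leq \domdim B$ forces $\gldim B = \domdim B = n+1$.
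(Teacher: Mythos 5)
This statement is not proved in the paper: it is Iyama's higher Auslander correspondence, quoted directly from \cite[Theorem 2.6]{Iya3} and used as a black box. There is therefore no internal proof to compare your sketch against, and the honest answer is that the paper simply cites the result.

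As for your sketch on its own terms, the overall architecture is the standard one (the functor $F=\Hom_A(M,-)$, the Morita--Tachikawa double-centralizer mechanism for the backward direction, $\add M$-resolutions for the global-dimension bound, and M\"uller's criterion for dominant dimension), but the dominant-dimension half contains a genuine error. You build a coresolution of $A$ by $\add M$-approximations and then assert that applying $F$ lands in $\add F(M)\subseteq \proj B\cap\inj B$. That inclusion is false: $\add FM$ is all of $\proj B$, and a term $FM^i$ is injective over $B$ precisely when $M^i$ is an injective $A$-module, not merely a member of $\add M$. Moreover, $B$ corresponds under $F$ to $M$, not to $A$, so coresolving $A$ computes the wrong object. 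The correct route is to take the minimal injective $A$-coresolution of $M$ itself, $0\to M\to I^0\to I^1\to\cdots$; the cogenerator hypothesis puts each $I^j$ in $\add M$, so each $FI^j$ lies in $\proj B\cap\inj B$, and exactness of $0\to B\to FI^0\to\cdots\to FI^{n-1}$ is precisely the vanishing $\Ext_A^i(M,M)=0$ for $1\le i\le n-1$, giving $\domdim B\ge n+1$. There are also two smaller slips: the $\add M$-resolution of $X\in\mod A$ has length $n-1$ (not $n$), and the bound $\gldim B\le n+1$ needs \emph{two} extra projective steps, since an arbitrary $B$-module requires a projective presentation before its second syzygy lands in the essential image of $F$, whence $(n-1)+2=n+1$; your "one extra step" applied only to simple modules skips this. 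Finally, $\domdim B\le\gldim B$ fails for semisimple $B$ (where $\gldim=0$ but $\domdim=\infty$), so the non-semisimplicity hypothesis is essential to your concluding deduction, not a side remark.
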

Here a module $M$ is called \emph{generator-cogenerator} if all indecomposable projective and all indecomposable injective modules are direct summands of $M$. Note that by definition all $n$-cluster tilting modules are generator-cogenerators.

The study of cluster-tilting modules and subcategories is an important problem with many connections to other areas in algebra. Those concepts were first introduced by Iyama in \cite{Iya1} and \cite{Iya2}. We refer for example to the surveys \cite{Iya3} and \cite{GLS} for an overview on cluster tilting modules and higher Auslander algebras and applications to fields such as Lie theory and cluster algebras.
The study of cluster tilting modules and higher Auslander algebras is now widely known as \emph{higher Auslander-Reiten theory}.
While cluster tilting modules and subcategories were studied for a wide range of algebras, so far no non-trivial example has been found for local algebras.
Here we restrict to finite dimensional local $K$-algebras $A$ that are quiver algebras, or equivalently $A/m \cong K$, where $m$ is the maximal ideal of $A$.
Note that over an algebraically closed field every local algebra is a quiver algebra and thus, over algebraically closed fields, this assumption is not a restriction.
We refer to $1$-cluster tilting modules as the trivial case since their existence corresponds to the classical representation-theoretic problem when a finite dimensional algebra is representation-finite, which is solved, see for example \cite{GR}.
It is well known and easy to see that the only local finite dimensional reprentation-finite quiver algebras are those with exactly one arrow, or equivalently those isomorphic to $K[x]/(x^n).$

In this note we give the first example of a local algebra with a non-trivial cluster tilting module.

\begin{theorem} \label{maintheorem}
Let $K$ be a field of characteristic 0.
Let $A$ be the local algebra $A=KQ/I$ with the following quiver and relations:
\begin{align*}
    Q &:= 
    \begin{tikzcd}
        \bullet
        \arrow["b", from=1-1, to=1-1, loop, in=325, out=35, distance=10mm]
        \arrow["a", from=1-1, to=1-1, loop, in=145, out=215, distance=10mm]
    \end{tikzcd} \\
    I &:= (a^2, ab+b^2+b^2a) 
\end{align*}
and let $M := DA \oplus \tau_2 DA  \oplus \tau_2^2 DA  \oplus \tau_2^3 DA  \oplus \tau_2^4 DA$, where $\tau_2 := \tau \Omega^1$.
Then $M$ is a 2-cluster tilting module.

\end{theorem}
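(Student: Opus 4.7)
The plan is to verify the hypotheses of Iyama's higher Auslander correspondence (Theorem~\ref{higherAuscorr}) for $M$. Since $A$ is local, it is ring-indecomposable, and since $A$ has nonzero radical it is non-semisimple, so the second part of the theorem applies. It therefore suffices to establish two things: (i) that $M$ is a generator-cogenerator, and (ii) that $B := \End_A(M)$ satisfies $\gldim B = \domdim B = 3$. Once both are in place, the theorem gives that $M$ is $2$-cluster tilting.

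For (i), the key observation is that a local algebra has a unique indecomposable projective, namely $A$, and a unique indecomposable injective, namely $DA$. The summand $DA$ is present by construction, so the non-trivial content is to show that $A$ is a direct summand of $M$. Since the summands of $M$ are the iterates $\tau_2^k DA$, the natural approach is to explicitly compute these modules for $k = 0, 1, 2, 3, 4$ and to verify that $\tau_2^4 DA \cong A$. Since $A$ is given by quiver and relations, each step reduces to a minimal projective presentation (to obtain $\Omega^1$) followed by the Auslander-Reiten translate, all of which can be executed algorithmically in QPA. The output of this computation also yields explicit dimensions, radical filtrations, and generators of the five indecomposable summands, which are needed for the next step.

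For (ii), I would compute $B = \End_A(M)$ by quiver and relations: the vertices correspond to the indecomposable summands of $M$ produced above, the arrows form a basis of $\rad \Hom / \rad^2 \Hom$ between pairs of summands, and the ideal of relations is extracted by computing the kernel of the resulting path-algebra surjection. With $B$ in hand, minimal projective resolutions of the simple $B$-modules give an upper bound on $\gldim B$, while the minimal injective coresolution of $B$ as a regular right $B$-module computes $\domdim B$. Both equal to $3$ is the target. All these computations are standard in QPA.

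The main obstacle is the explicit identification of the $\tau_2$-orbit of $DA$ together with the bookkeeping required to present $B$ by quiver and relations: although $A$ itself is only $2$-generated and low-dimensional, the syzygy and $\tau$ functors enlarge dimensions quickly, and the Hom-spaces between summands of $M$ need not be small. The characteristic zero hypothesis presumably enters through the invertibility of certain scalars appearing when manipulating the quadratic relation $ab + b^2 + b^2 a$, and needs to be monitored throughout. Once the computational data is correct, invoking Theorem~\ref{higherAuscorr} is immediate.
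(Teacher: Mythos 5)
Your plan coincides with the paper's proof in every substantive step: reduce to checking $\tau_2^4 DA \cong A$ so that $M$ is a generator-cogenerator, compute $B = \End_A(M)$ by quiver and relations in QPA, verify $\gldim B = \domdim B = 3$, and then invoke Theorem~\ref{higherAuscorr}. So the overall strategy is the right one.

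There is, however, one genuine gap, and your remark about the characteristic-zero hypothesis points in the wrong direction for it. The QPA computation can only be carried out over a \emph{specific} field (the paper uses $K = \mathbb{Q}$), whereas the theorem is stated for an \emph{arbitrary} field of characteristic $0$; your proposal never explains how to pass from one to the other. The issue is not merely keeping track of which scalars must be invertible in $K$ (the coefficient $\tfrac{1}{2}$ in the relations shows $2$ must be invertible, but that is a far weaker condition than characteristic $0$); the issue is that the values of $\gldim$ and $\domdim$ computed over $\mathbb{Q}$ must be shown to persist after base change. The paper closes this gap explicitly: writing $A_K = K \otimes_{\mathbb{Q}} A$ and $M_K = K \otimes_{\mathbb{Q}} M$, one has $\End_{A_K}(M_K) \cong K \otimes_{\mathbb{Q}} \End_A(M)$, so the quiver and relations of $B$ are unchanged, and then one invokes the facts that global dimension and dominant dimension of quiver algebras are invariant under field extension (citing M\"uller and Eilenberg--Rosenberg--Zelinsky). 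Without some such base-change argument, your proof only establishes the theorem over $\mathbb{Q}$ (or whichever single field QPA was run over), not over every field of characteristic $0$.
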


Our proof calculates the endomorphism ring of this module $M$ and shows that the endomorphism algebra has global dimension 3 and dominant dimension 3.
Then $M$ is 2-cluster tilting using the higher Auslander correspondence \ref{higherAuscorr}.

The proof will be given using the GAP-package QPA \cite{QPA}.
We explicitly calculate the endomorphism ring of the module $M$ in the theorem by quiver and relations and use the higher Auslander correspondence due to Iyama to show that it is indeed 2-cluster tilting.
For the calculation of the endomorphism ring we provide a program in the appendix using the GAP-package QPA that improves the existing QPA-algorithm "EndOfModuleAsQuiverAlgebra" in the sense that it is much faster and also works over infinite fields.

\section{Calculation of the endomorphism ring in QPA}

\begin{theorem}
Let $K$ be a field of characteristic 0.
Let $A$ and $M$ be as in Theorem \ref{maintheorem}.
Then $B:=\End_A(M)$ is isomorphic to $KP/J,$ where
$P$=
\[\begin{tikzcd}
	1 && 2 \\
	& 3 \\
	4 && 5
	\arrow["a"{description}, from=1-1, to=1-3]
	\arrow["b"{description}, from=1-1, to=3-1]
	\arrow["c"{description}, from=1-3, to=2-2]
	\arrow["d"{description}, from=1-3, to=3-3]
	\arrow["e"{description}, from=2-2, to=3-1]
	\arrow["f"{description}, shift left=3, from=3-1, to=1-1]
	\arrow["g"{description}, shift right=3, from=3-1, to=1-1]
	\arrow["h"{description}, from=3-1, to=3-3]
	\arrow["i"{description}, shift left=3, from=3-3, to=1-3]
	\arrow["j"{description}, shift right=3, from=3-3, to=1-3]
\end{tikzcd}\]

\begin{equation*}
\begin{split}
    J &=(e f,\quad   i c,\quad 
    - b g- a c e g+  b g b f,\quad 
    j d+  j c e h+  j d j d+2 j d i d+  i d j d, \\
    &- d j d -\frac{1}{2} c e g b h- d j c e h+  d j d j d,\quad 
    b h+  a c e h i d,\quad
   - a d- a d i d+  b f a c e h,\\
   &- d i+  c e h j+  c e g b f a,\quad
    e h j c+  e g b f a c,\quad
   - f b+  h j c e+  g b f a c e, \\
&g a- h j- h i d j+  f a c e h i)
\end{split}
\end{equation*}
Furthermore, $B$ has global and dominant dimension equal to three.
\end{theorem}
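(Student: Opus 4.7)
The plan is to prove the theorem by direct computation in QPA, producing the quiver-and-relations presentation of $B = \End_A(M)$ and then reading off its two homological invariants.

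First I would implement $A = KQ/I$ in QPA, construct $DA$ as a right $A$-module, and iteratively form $\tau_2^k DA$ for $k = 1, 2, 3, 4$ using the identity $\tau_2 = \tau \Omega^1$. At each step I would verify that the new summand is indecomposable and non-isomorphic to the previous terms, so that $M$ decomposes into exactly five pairwise non-isomorphic indecomposables $M_1, \dots, M_5$, which correspond to the five vertices of $P$. Since $M$ must be a generator-cogenerator, one of these summands has to coincide with $A$ itself, a useful intermediate sanity check.

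Second, I would compute $B$ by quiver and relations. The arrows from vertex $i$ to vertex $j$ form a $K$-basis of $\rad \Hom_A(M_i, M_j) / \rad^2 \Hom_A(M_i, M_j)$; lifting these to actual morphisms yields a surjection $KP \twoheadrightarrow B$, and generators of its kernel form the ideal $J$. The fractional coefficient $-\frac{1}{2}$ in $J$ forces the hypothesis that $2$ is invertible in $K$, which is guaranteed by $\mathrm{char}(K) = 0$. Rather than the built-in \texttt{EndOfModuleAsQuiverAlgebra}, which is slow and restricted to finite base fields, I would use the improved routine provided in the appendix, which operates directly over $K$.

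Third, with $B \cong KP/J$ available, I would compute a minimal projective resolution of each of the five simple $B$-modules in QPA; the largest length is $\gldim B$, which should come out to $3$. For $\domdim B$ I would compute the minimal injective coresolution of $B$ term by term and identify the first index where the injective module is not projective-injective; this index should again be $3$. The main obstacle is the second step: extracting a minimal and correct generating set of relations is delicate, since any redundancy or omission would spoil the stated presentation. To guard against this I would independently verify that $KP/J$ with the relations as stated has the expected $K$-dimension $\dim_K \End_A(M)$, and that each listed relation translates to a genuine identity among the morphisms in $\End_A(M)$ under the identification of arrows with irreducible maps between the $M_i$.
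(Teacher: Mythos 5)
Your outline follows the paper's strategy closely: build $M$ in QPA, extract a quiver-and-relations presentation of $\End_A(M)$ via the improved routine from the appendix, confirm $\gldim$ and $\domdim$ directly, and corroborate the presentation by a dimension count. That is exactly what the authors do. Two points deserve attention, one cosmetic and one a genuine gap.

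The cosmetic point: the QPA output does not produce the $11$ relations you quote; it produces a much larger generating set (the paper reports $71$ relators). You need an explicit step to pass from the raw output to the stated $J$. The paper does this by discarding relators one at a time and checking that $\dim_K KP/J$ is unchanged; your proposal to compare $\dim_K KP/J$ with $\dim_K \End_A(M)$ accomplishes the same goal, provided you also verify that the $11$ stated relators actually lie in the kernel of $KP \twoheadrightarrow B$ (so that the surjection $KP/J \twoheadrightarrow B$ is defined and the dimension equality forces it to be an isomorphism). You do mention this last verification, so this part is fine once spelled out.

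The genuine gap: you say the improved routine ``operates directly over $K$.'' A computer algebra system cannot compute over an abstract field of characteristic $0$; it computes over, say, $\mathbb{Q}$. To get the theorem for \emph{every} field $K$ of characteristic $0$ you need a base-change argument, which your proposal omits. The paper supplies it: with $A_K = K\otimes_{\mathbb{Q}} A$ and $M_K = K\otimes_{\mathbb{Q}} M$ one has $\End_{A_K}(M_K) \cong K\otimes_{\mathbb{Q}}\End_A(M)$, so the presentation over $\mathbb{Q}$ transfers to $K$, and then one invokes the facts that global dimension and dominant dimension of a finite-dimensional quiver algebra are unchanged under field extension (M\"uller; Eilenberg--Rosenberg--Zelinsky). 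Without this step your argument only proves the theorem over $\mathbb{Q}$ (or whatever computable field the machine used), not over arbitrary characteristic-$0$ fields. Note also that the $-\tfrac{1}{2}$ coefficient is not by itself a reason the characteristic must be $0$; it only shows $2$ must be invertible. The real role of characteristic $0$ here is that every such field contains $\mathbb{Q}$, so the base-change argument applies uniformly.
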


\begin{proof}
We show the result first for $K= \mathbb{Q}$, the rationals.
We do this using the GAP-package QPA with the additional code in the appendix of this article for the command ``QuiverAndRelationsOfEndOfModule".
Here are the relevant commands:
\begin{verbatim}
K := Rationals; 
Q := Quiver(1,[[1,1,"a"],[1,1,"b"]]);
KQ := PathAlgebra(K,Q);
AssignGeneratorVariables(KQ);
rel := [a^2, a*b+b^2+b^2*a];
A := KQ/rel;
Dimension(A);
IsSelfinjectiveAlgebra(A);
RegA := DirectSumOfQPAModules(IndecProjectiveModules(A));
CoRegA := DirectSumOfQPAModules(IndecInjectiveModules(A));
U1 := DTr(NthSyzygy(CoRegA,1));
U2 := DTr(NthSyzygy(U1,1));
U3 := DTr(NthSyzygy(U2,1));
U4 := DTr(NthSyzygy(U3,1));
IsProjectiveModule(U4);
M := DirectSumOfQPAModules([CoRegA,U1,U2,U3,U4]);
B := QuiverAndRelationsOfEndOfModule(M,20)[1];
GlobalDimensionOfAlgebra(B,3);
DominantDimensionOfAlgebra(B,3);
QQ:=QuiverOfPathAlgebra(B);
Display(QQ);
rel:=RelatorsOfFpAlgebra(B);
Dimension(B);
\end{verbatim}
Here is some explanation:
We first define the algebra by quiver and relations and after checking that it has vector space dimension 6 and is not selfinjective, we define the regular module $A$ and its dual $DA$ and then the module $M= DA \oplus \tau_2 DA  \oplus \tau_2^2 DA  \oplus \tau_2^3 DA  \oplus \tau_2^4 DA$.
We check that $\tau_2^4 DA \cong A$, so that $M$ is indeed a generator-cogenerator.
Then we calculate the endomorphism ring $B=\End_A(M)$ and check that the global dimension and the dominant dimension are indeed equal to 3.
Then we display the quiver of $B$ and its relations. Note that the output gives 71 relations instead of just 11 as in the theorem. 
We eliminated unnecessary relations by using the fact that one can delete a relation if the dimension of the algebra after deletion does not change.
To see that it is really the same algebra, note that the above calculation shows 
the vector space dimension of $B$ is equal to 165.
The next code shows that the quiver algebra with the same quiver and only the 11 relations as in the theorem (which are a subset of the 71 relations as in the first output) has the same vector space dimension and thus those two algebras must be isomorphic:
\begin{verbatim}
Q:=Quiver( ["v1","v2","v3","v4","v5"], [["v1","v2","a10"],["v1","v4","a9"],
["v2","v3","a8"],["v2","v5","a7"],["v3","v4","a6"],["v4","v1","a4"],
["v4","v1","a5"],["v4","v5","a3"],["v5","v2","a1"],["v5","v2","a2"]] );
KQ:=PathAlgebra(Rationals,Q);AssignGeneratorVariables(KQ);
rel:=
[ (1)*a6*a5, (1)*a2*a8,(-1)*a9*a4+(-1)*a10*a8*a6*a4+(1)*a9*a4*a9*a5,  
  (1)*a1*a7+(1)*a1*a8*a6*a3+(1)*a1*a7*a1*a7+(2)*a1*a7*a2*a7+(1)*a2*a7*a1*a7,
  (-1)*a7*a1*a7+(-1/2)*a8*a6*a4*a9*a3+(-1)*a7*a1*a8*a6*a3+(1)*a7*a1*a7*a1*a7,
  (1)*a9*a3+(1)*a10*a8*a6*a3*a2*a7,
  (-1)*a10*a7+(-1)*a10*a7*a2*a7+(1)*a9*a5*a10*a8*a6*a3,
  (-1)*a7*a2+(1)*a8*a6*a3*a1+(1)*a8*a6*a4*a9*a5*a10,
  (1)*a6*a3*a1*a8+(1)*a6*a4*a9*a5*a10*a8,
  (-1)*a5*a9+(1)*a3*a1*a8*a6+(1)*a4*a9*a5*a10*a8*a6,
  (1)*a4*a10+(-1)*a3*a1+(-1)*a3*a2*a7*a1+(1)*a5*a10*a8*a6*a3*a2];
C:=KQ/rel;Dimension(C);
\end{verbatim}

Note that we changed the names of the arrows in the theorem to make the presentation slightly prettier.
QPA also verified for us that the global and dominant dimension of $B$ are indeed equal to three for $K=\mathbb{Q}.$
What is left to show is that this is still correct over an arbitrary field $K$ of characteristic 0.
To see this let $A_K=KQ/I= K \otimes_{\mathbb{Q}} \mathbb{Q}Q/I$ and $M_K=K \otimes_{\mathbb{Q}} M$.
Then $\End_{A_K}(M_K)\cong K \otimes_{\mathbb{Q}} \End_A(M)$, which shows that the quiver and relations are still fine for field extensions of $\mathbb{Q}$.
Now we use that the global dimension and dominant dimension do not change for quiver algebras under field extensions (see \cite[Lemma 5]{Mue} and \cite[Corollary 18]{ERZ}), which finishes the proof.
\end{proof}
As explained before, Theorem \ref{maintheorem} now follows immediately from the higher Auslander correspondence \ref{higherAuscorr}.

\section{Remarks, Open problems and questions}
The algebra $A$ in Theorem \ref{maintheorem} was first found around 2018 in experiments by Jan Geuenich and Rene Marczinzik in searching local finite dimensional non-selfinjective algebras with $\Ext_A^1(D(A),A)=0$.
Rene Marczinzik then noted that this algebra $A$ has a 2-precluster tilting module  $M$ (in the sense of \cite{IS}) and conjectured that this module is indeed 2-cluster tilting by observing that the Cartan determinant of the endomorphism ring is equal to one. 
The existing QPA command ``EndOfModuleAsQuiverAlgebra"was too slow to calculate the endomorphism ring, even when using a supercomputer and waiting several weeks.
It was then posed as a question on mathoverflow, see \cite{MO}, whether $M$ is indeed a 2-cluster tilting module. \O yvind Solberg found a solution that works over a fixed finite field using relative homological algebra and QPA, but calculating the corresponding higher Auslander algebra remained open. It would be interesting to find a human proof of Theorem \ref{maintheorem} that works for arbitrary fields of any characteristic.

We experimented with many other local algebras and surprisingly never found another example with a non-trivial cluster tilting module.
We end this article with several conjectures, questions and problems that are based on those experiments.

\begin{conjecture} \label{conj}
Let $A$ be a local commutative algebra with an $n$-cluster tilting module. Then $n=1$ and $A \cong K[x]/(x^m)$ for some $m \geq 1$.
\end{conjecture}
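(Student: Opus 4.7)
My plan splits the conjecture into the trivial case $n=1$ and the substantive case $n \geq 2$. For $n=1$, a $1$-cluster tilting module is an additive generator of $\mod A$, so $A$ is representation-finite; since $A$ is a local quiver algebra with a single vertex, representation-finiteness forces a single loop and hence $A \cong K[x]/(x^m)$, as recorded in the introduction.

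For $n \geq 2$, I would aim at a contradiction. Let $M$ be the $n$-cluster tilting module. Since $M$ is a generator-cogenerator and $A$ is local, the indecomposables $A$ and $DA$ are both direct summands of $M$, so $\Ext^i_A(DA,A)=0$ for $1 \leq i < n$. Using the adjunction $\Ext^i_A(X,A) \cong D\operatorname{Tor}_i^A(X,DA)$ valid for Artinian commutative $A$, this rewrites as $\operatorname{Tor}_i^A(DA,DA)=0$ in the same range. For $n$ sufficiently large these Tor-vanishings should, in the spirit of the commutative Auslander--Reiten conjecture, force $DA$ to have finite projective dimension and hence to be free over Artinian $A$, equivalently $A$ to be Gorenstein, and thus (being commutative) symmetric. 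In the symmetric local case $\tau = \Omega^2$ on $\underline{\mod}A$, so the condition $\tau_n M \cong M$ stably translates into $\Omega$-periodicity of each non-projective indecomposable summand of $M$. Combining this with $(n-1)$-step $\add M$-resolutions of arbitrary modules, one should upgrade periodicity of summands of $M$ to periodicity of every non-projective $A$-module; Eisenbud's characterisation of hypersurfaces among commutative local Gorenstein rings then forces $A$ to be a hypersurface, i.e.\ $A \cong K[x]/(x^m)$.

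The principal difficulty is that for small $n$, in particular $n=2$, one has only $\Ext^1_A(DA,A)=0$ at one's disposal, and a direct calculation shows that $\operatorname{Tor}_1^A(DA,DA)$ can vanish even when $A$ is not Gorenstein (for instance $A = K[x,y]/(x,y)^2$), so the reduction to the Gorenstein case breaks down. Here I would exploit the second cluster-tilting axiom more subtly: analyse $\Ext^1_A(K,M)$ for $K$ the simple module and argue that its vanishing would force $K \in \add M$, contradicting $\Ext^1_A(K,K) \neq 0$ whenever $\rad A \neq 0$, while its non-vanishing should impose structural constraints on the syzygies of the summands of $M$ that are incompatible with $\Ext^1_A(M,M)=0$ unless the embedding dimension of $A$ is one. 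Making this dichotomy rigorous is the crux of the conjecture, and I expect it will require exploiting the fact that commutativity of $A$ places $A$ inside the centre of the endomorphism algebra $B = \End_A(M)$, combined with the higher Auslander constraints $\gldim B = \domdim B = n+1$, which together should severely restrict the admissible Gabriel quiver and relations of $B$.
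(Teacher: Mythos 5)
This statement is labelled \emph{Conjecture} (\ref{conj}), and the paper offers no proof of it; it is explicitly left open. There is therefore no ``paper's own proof'' to compare against. What the paper does instead is record experimental evidence and pose several Tachikawa-type questions whose affirmative resolution would settle parts of the conjecture: e.g.\ the paper observes that if $\Ext^1_A(DA,A)\neq 0$ for every local commutative non-selfinjective $A$, the conjecture follows in the non-selfinjective case, and that non-vanishing of $\Ext^1_A(M,M)$ for every non-projective $M$ over a local selfinjective algebra would rule out non-trivial cluster tilting modules there.

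Your $n=1$ reduction is correct and matches the paper's introductory remark. For $n\geq 2$ your text is a strategy sketch rather than a proof, and you acknowledge as much. The genuine gaps are: (i) the appeal to the ``spirit of the commutative Auslander--Reiten conjecture'' is an appeal to an open conjecture, not a theorem; (ii) the key step you describe for $n=2$ --- the dichotomy around $\Ext^1_A(K,M)$ --- is left entirely unmade rigorous, which you yourself flag as the crux; and (iii) the passage from $\Omega$-periodicity of summands of $M$ to periodicity of all non-projective modules, and from there to the hypersurface property via Eisenbud, is asserted rather than argued. Incidentally, your claimed obstruction to the Gorenstein reduction at small $n$ is incorrect: for $A=K[x,y]/(x,y)^2$ the radical cube is zero, so by Asashiba's result (cited in the paper as \cite{As}) one has $\Ext^1_A(DA,A)\neq 0$, hence $\operatorname{Tor}_1^A(DA,DA)\neq 0$ under the duality you invoke; this algebra is therefore not a counterexample. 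In summary, your plan is broadly consistent in spirit with the paper's own proposed route through Tachikawa-style non-vanishing statements, but neither your proposal nor the paper constitutes a proof of the conjecture.
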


Recall that a generator-cogenerator $M$ is called \emph{precluster tilting} 
if $B:=\End_A(M)$ satisfies
$\idim B \leq n \leq \domdim B$ for some $n \geq 2$. 
It is known that a precluster tilting module $M$ is even cluster tilting if and only if $B=\End_A(M)$ has finite global dimension, see \cite{IS}.
The Cartan determinant conjecture predicts that the determinant of the Cartan matrix of an algebra of finite global dimension is always equal to one 1, see \cite{Z}. On the other hand, many experiments suggest that having Cartan determinant equal to 1  characterises when an endomorphism ring of a precluster tilting module has finite global dimension. This motivates our next question:
\begin{question}
Let $A$ be an algebra with precluster tilting module $M$ and $B:=\End_A(M).$
Do we have the following equivalence: $M$ is cluster tilting if and only if the Cartan determinant of $B$ is equal to 1?
\end{question}
A positive answer to this question would make checking whether a given generator-cogenerator is cluster tilting very quick and easy since checking the precluster tilting condition is reduced to calculating $\Ext$ and higher Auslander-Reiten translates (see \cite{IS}), and calculating the Cartan matrix is reduced to calculating Hom spaces between the indecomposable summands of $M$.

We pose three further questions that are related to the famous Tachikawa conjectures restricted to local algebras, see, for example, \cite{Yam} for a survey discussing the Tachikawa conjectures.

\begin{question}
If $A$ is local and not selfinjective, can we have $\Ext_A^i(D(A),A)=0$ for $i=1,2$?    
\end{question}
The answer is negative for algebras with radical cube zero, see \cite{As}.
Note that if we always have $\Ext_A^1(D(A),A) \neq 0$ or $\Ext_A^2(D(A),A) \neq 0$ for a local non-selfinjective algebra $A$, then there can not be any $n$-cluster tilting module for $n \geq 3$ for such algebras.

\begin{question}
     If $A$ is local and commutative and non-selfinjective, can we have $\Ext_A^1(D(A),A)=0$?
\end{question}
Again, the answer is negative for radical cube zero algebras by \cite{As}.
Note that if we always have $\Ext_A^1(D(A),A) \neq 0$ for all such algebras, then this would also prove our conjecture \ref{conj} in the non-selfinjective case.
\begin{question}
    
If $A$ is local and selfinjective, can we have $\Ext_A^1(M,M)=0$ for a non-projective $A$-module $M$?
\end{question}
The question is known to have a negative answer for radical cube zero symmetric algebras by results of Hoshino, see \cite{Ho}. If we indeed always have $\Ext_A^1(M,M) \neq 0$ for non-projective $M$ in a local selfinjective algebra, this would mean that there can not exist any non-trivial cluster tilting modules in such algebras.

We discovered the first non-trivial cluster tilting module for a local algebra in this article. Despite many attempts, we did not find further examples, so we pose this as a final problem:
\begin{problem}
Find local finite dimensional algebras with a $n$-cluster tilting module for $n \geq 2$ of arbitrary large vector space dimensions.
\end{problem}
\newpage 
\section{Appendix: QPA code}
Here we present the QPA-code in order to calculate the endomorphism ring of basic modules over quiver algebras. 

\begin{verbatim}
LoadPackage("qpa");

#######################
# Auxiliary Functions #
#######################

DeclareOperation("VectorBasis",[IsField, IsList]);
InstallMethod( VectorBasis,
    "for a base field and a set of generators",
    [ IsField, IsList], 0,
    function( K , generators )

    local V;
    V := VectorSpace(K, generators, "basis");
    return Basis(V, generators);
end);

DeclareOperation("Relation",[IsList, IsList, IsDictionary, IsPositionalObjectRep, IsField]);
InstallMethod( Relation,
    "for a base field and a set of generators",
    [IsList, IsList, IsDictionary, IsPositionalObjectRep, IsField], 0,
    function(basis, coeffs, path_dict, elt_path, K)

    local relation, i;
    if coeffs = fail then
        return fail;
    else
        relation := elt_path;
        for i in [1..Length(basis)] do
            if coeffs[i] <> Zero(K) then
                relation := relation - (coeffs[i] * LookupDictionary(path_dict, basis[i]));
            fi;
        od;
    fi;
    return relation;
end);

DeclareOperation("MatrixByDiagonalBlocks",[IsList, IsRing]);
InstallMethod( MatrixByDiagonalBlocks,
    "for a list of diagonal blocks of a matrix, and a ring",
    [IsList, IsRing], 0,
    function(blocks, R)

    local mat, n, m, c_index, i, a, b;
    n := Length(blocks);
    c_index := [0];

    for i in [1..n] do
        Add(c_index, Size(blocks[i]) + c_index[i]);
    od;
    m := Last(c_index);
    mat := NullMat(m,m,R);

    for i in [1..n] do
        a := c_index[i]+1;
        b := c_index[i+1];
        mat{[a..b]}{[a..b]} := blocks[i];
    od;
    return mat;
end);

DeclareOperation("IdempotentsOfEndOverAlgebra",[IsPathAlgebraMatModule]);
InstallMethod( IdempotentsOfEndOverAlgebra,
    "for a representations of a quiver",
    [IsPathAlgebraMatModule], 0,
    function(M)

    local K, bsids, idemps, maps, mat, i;
    K := LeftActingDomain(M);
    bsids := BlockSplittingIdempotents(M);

    idemps := [];
    maps := NullMat(Dimension(M),Dimension(M),K); 
    for i in [1..Length(bsids)] do
        mat := MatrixByDiagonalBlocks(bsids[i]!.maps, K);
        Add(idemps, Immutable(mat));
    od;
        
    return idemps;
end);

#################
# Main Function #
#################

# M - module
# max_length - maximum path length for search in quiver
DeclareOperation("QuiverAndRelationsOfEndOfModule",[ IsPathAlgebraMatModule, IsInt]);
InstallMethod( QuiverAndRelationsOfEndOfModule,
    "for a representations of a quiver and a maximum search depth",
    [ IsPathAlgebraMatModule, IsInt], 0,
    function( M , max_length )

    local K, endo, radendo, radendo2, idemps, n_idemps, O, Arrows, n_arrows, 
        quiver_data, reordered_arrows, matrix, Q, KQ, vertex_module_list, 
        path_dict, Relations, Elts, len, new_elts, generators, basis, coeffs
        x, y, i, j, k, a, x_path, y_path, B;

    K := LeftActingDomain(M);

    endo := EndOverAlgebra(M); 
    radendo := RadicalOfAlgebra(endo); 
    radendo2 := ProductSpace(radendo,radendo); 
    idemps := IdempotentsOfEndOverAlgebra(M);
    n_idemps := Size(idemps);
    O := Immutable(Zero(endo));

    ##########
    # Quiver #
    ##########

    Arrows := [];
    for x in Basis(radendo) do
        if not (x in radendo2) then Add(Arrows, Immutable(x)); fi;
    od;

    n_arrows := Size(Arrows);

    # the arrows need to be in the order given below so we can match them to 
    # the path algebra elements later
    quiver_data := [];
    reordered_arrows := [];
    matrix := NullMat(n_idemps,n_idemps);
    for i in [1..n_idemps] do
        for j in [1..n_idemps] do
            for k in [1..n_arrows] do
                if idemps[i] * Arrows[k] * idemps[j] = Arrows[k] then
                    Add(quiver_data, [i,j, Concatenation("a", String(k))]);
                    Add(reordered_arrows, Arrows[k]);
                    matrix[i][j] :=  matrix[i][j] + 1;
                fi;
            od;
        od;
    od;
    Arrows := reordered_arrows;

    Q:=Quiver(n_idemps, quiver_data);

    #############
    # Relations #
    #############

    KQ := PathAlgebra(K, Q);

    vertex_module_list:=[]; # vertex_module_list[i] = module associated to vertex i
    AssignGeneratorVariables(KQ);
    path_dict:=NewDictionary(O, true, endo);
    for i in [1..n_idemps] do
        AddDictionary(path_dict, idemps[i], VerticesOfPathAlgebra(KQ)[i]);
        Add(vertex_module_list, 
          [VerticesOfPathAlgebra(KQ)[i], Image(BlockSplittingIdempotents(M)[i])]);
    od;
    for i in [1..n_arrows] do
        AddDictionary(path_dict, Arrows[i], ElementOfPathAlgebra(KQ, ArrowsOfQuiver(Q)[i]));
    od;

    Relations := [];
    Elts := [Arrows];

    len := 1;
    while Last(Elts) <> [] and len <= max_length do
        new_elts := [];
        for x in Last(Elts) do
            for i in [1..n_arrows] do
            y_path := LookupDictionary(path_dict, x) * LookupDictionary(path_dict, Arrows[i]);
                if not IsZero(y_path) then
                    y := x * Arrows[i];
                    generators := Concatenation(Concatenation(Elts), new_elts);
                    basis := VectorBasis(K, generators);
                    coeffs := Coefficients(basis, y);
                    if  coeffs = fail then
                        Add(new_elts, y);
                        AddDictionary(path_dict, y, y_path);
                    else
                        Add(Relations, Relation(basis, coeffs, path_dict, y_path, K));
                    fi;
                fi;
            od;
        od;
        Add(Elts, new_elts);
        len := len + 1;
    od;

    if len > max_length then
        Display("Warning: search exceeded max_length; relations may not be complete.\n");
    fi;

    B := KQ/Relations;

    return [B, Q, matrix, quiver_data, vertex_module_list, Relations];
end);
\end{verbatim}

\section{Acknowledgment} 
The first author thanks \O yvind Solberg for his constant support regarding questions and suggestions on QPA. He also thanks Jan Geuenich for the fun experiments done back in 2018 on finding exotic finite dimensional algebras. The QPA-code to calculate the endomorphism ring of a module is based on an earlier verison of  \O yvind Solberg that is already available in QPA.

\end{document}